\newtheorem{theorem}{Theorem}
\newtheorem{lemma}[theorem]{Lemma}
\newtheorem{claim}[theorem]{Claim}
\def\hhref#1{\href{http://arxiv.org/abs/#1}{arXiv:#1}}         
\author{Boris Bukh\footnote{\texttt{B.Bukh@dpmms.cam.ac.uk}.
Centre for Mathematical Sciences,
Cambridge CB3 0WB, England
and Churchill College, Cambridge CB3 0DS, England.}}
\title{Multidimensional Kruskal--Katona theorem%
\footnote{The paper is in public domain, and is not protected by copyright. The paper
is available at \hhref{1009.2375}}.}
\date{}
\newcommand*{\R}{\mathbb{R}}                                   
\newcommand*{\F}{\mathcal{F}}                                  
\newcommand*{\N}{\mathbb{N}}                                   
\newcommand*{\abs}[1]{\lvert #1\rvert}                         
\newcommand*{\norm}[1]{\lVert #1\rVert}                        
\DeclareMathOperator{\ind}{ind}                                
\DeclareMathOperator{\extr}{extr}                              
\DeclareMathOperator{\mclos}{mclos}                            
\DeclareMathOperator{\vol}{vol}  
\DeclareMathOperator{\area}{area}  
\newcommand*{\eqdef}{\stackrel{\text{\tiny def}}{=}}
\begin{document}
\maketitle

\begin{abstract}
We present a generalization of a version of the Kruskal--Katona theorem due to Lov\'asz.
A shadow of a $d$-tuple $(S_1,\dotsc,S_d)\in \binom{X}{r}^d$ consists of $d$-tuples
$(S_1',\dotsc,S_d')\in \binom{X}{r-1}^d$ obtained by removing one element
from each of the $S_i$. We show that if a family $\F\subset \binom{X}{r}^d$ has size
$\abs{\F}=\binom{x}{r}^d$ for a real number $x\geq r$, then
the shadow of $\F$ has size at least $\binom{x}{r-1}^d$.
\end{abstract}

\section*{Introduction}
An $r$-uniform set family $\F$ is simply a collection of $r$-element sets. The
shadow of $\F$, denoted $\partial \F$, consists of all $(r-1)$-element
sets that can be obtained by removing an element from a set in $\F$. 
If $(X,<)$ is an ordered set, then $A\subset X$
is colexicographically smaller than $B\subset X$ if the largest element of 
$(A\cup B)\setminus(A\cap B)$ lies in $B$. 

The Kruskal--Katona theorem \cite{kruskal,katona} is a classic 
result in combinatorics that states that 
$\abs{\partial \F}\geq \abs{\partial F_0}$,
where $\F_0$ is the initial segment of length 
$\abs{\F}$ in colexicographic order
on $r$-tuples of some ordered set. Moreover equality 
is achieved only if $\F$ is an initial segment of such 
a colexicographic order. As the quantitative form
of the Kruskal--Katona theorem is unwieldy, in applications 
one usually uses the weaker form due to Lov\'asz \cite[Ex.~13.31(b)]{lovasz}: 
if $\abs{\F}=\binom{x}{r}$ for some
real number\footnote{For real $x$ and integer $r$ the binomial 
coefficient $\binom{x}{r}$ is defined by $x(x-1)\dotsb(x-r+1)/r!$.} 
$x\geq r$, then $\abs{\partial F}\geq \binom{x}{r-1}$.

In this paper we present a generalization of Lov\'asz's theorem to 
multidimensional $r$-uniform families. A \emph{$d$-dimensional $r$-uniform 
family} is a collection of $d$-tuples of $r$-element sets. In other words, if we 
denote by $\binom{X}{r}$ the family of all $r$-element subsets of $X$, 
then a $d$-dimensional $r$-uniform family is a subset of $\binom{X}{r}^d$.
A \emph{shadow} of such a family $\F\subset \binom{X}{r}^d$ is defined to be
\begin{equation*}
\partial \F \eqdef \{ (S_1\setminus\{x_i\},\dotsc,S_d\setminus\{x_d\} ) :  
   (S_1,\dotsc,S_d) \in \F, \text{and }x_i\in S_i\text{ for }i=1,\dotsc,d\}.
\end{equation*}
The special case $d=1$ of the following theorem is Lov\'asz's result.
\begin{theorem}\label{main_theorem}
Suppose $\F\subset \binom{X}{r}^d$ is a $d$-dimensional $r$-uniform family
of size
\begin{equation*}
\abs{\F}=\binom{x}{r}^d,
\end{equation*}
where $x\geq r$ is a real number. Then
\begin{equation*}
\abs{\partial \F}\geq \binom{x}{r-1}^d.
\end{equation*}
Moreover, equality holds only if $\F$ is of the form
$\binom{Y_1}{r}\times \dotsb\times \binom{Y_d}{r}$ for some sets
$Y_1,\dotsc,Y_d\subset X$.
\end{theorem}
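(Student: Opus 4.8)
The plan is to argue by induction on the dimension $d$, the base case $d=1$ being Lov\'asz's theorem, which I may use. For the inductive step I would compress in the last coordinate, slice along it, apply the inductive hypothesis slice by slice, and reduce to a purely numerical inequality. For $i<j$ let $C^{(d)}_{ij}$ be the operation that, in each $d$-tuple $(S_1,\dots,S_d)\in\F$ with $j\in S_d$ and $i\notin S_d$, replaces $S_d$ by $(S_d\setminus\{j\})\cup\{i\}$ unless the resulting $d$-tuple already lies in $\F$. First I would check that $C^{(d)}_{ij}$ never increases $\abs{\partial\F}$: this is the classical Kruskal--Katona shadow-compression estimate, run with $S_1,\dots,S_{d-1}$ and the untouched part of $S_d$ as passive spectators. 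Iterating all the $C^{(d)}_{ij}$ then produces a family of the same size, with no larger shadow, in which every fibre $\{S_d:(S_1,\dots,S_d)\in\F\}$ over a fixed tuple $(S_1,\dots,S_{d-1})$ is an initial segment of the colexicographic order on $\binom{X}{r}$; so it suffices to treat such ``$d$-compressed'' families (keeping track of the equality case as one goes).

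Fix the colexicographic enumeration $C_1<C_2<\cdots$ of $\binom{X}{r}$ and set $\mathcal A_k=\{(S_1,\dots,S_{d-1}):(S_1,\dots,S_{d-1},C_k)\in\F\}\subseteq\binom{X}{r}^{d-1}$. Since the fibres are initial segments, they are nested, so $\mathcal A_1\supseteq\mathcal A_2\supseteq\cdots$ and $\abs{\F}=\sum_k\abs{\mathcal A_k}$. A short bookkeeping step --- as $k$ increases, $C_k$ brings into the last coordinate of $\partial\F$ exactly those $T\in\binom{X}{r-1}$ for which $C_k$ is the colex-smallest $r$-set containing $T$, namely the sets $C_k\setminus\{a\}$ with $\{1,\dots,a\}\subseteq C_k$, of which there are $w_k$, the length of the longest initial segment of $\{1,2,\dots\}$ contained in $C_k$ --- gives, for $d$-compressed $\F$,
\begin{equation*}
\abs{\partial\F}=\sum_{k}w_k\,\abs{\partial\mathcal A_k},\qquad\text{and}\qquad\sum_{k\le m}w_k=\partial^{*}(m)\ \text{ for every }m,
\end{equation*}
where $\partial^{*}(m)$ denotes the shadow-size of the initial colexicographic segment of size $m$. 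At $m=\binom{y}{r}$ the second identity reads $\partial^{*}\!\left(\binom{y}{r}\right)=\binom{y}{r-1}$, equivalently the hockey-stick identity $\sum_{s=1}^{r}\binom{y-s}{r-s}=\binom{y}{r-1}$.

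Discarding empty $\mathcal A_k$ and writing $\abs{\mathcal A_k}=\binom{y_k}{r}^{d-1}$ with $y_k\ge r$ (so $y_1\ge y_2\ge\cdots$), the inductive hypothesis gives $\abs{\partial\mathcal A_k}\ge\binom{y_k}{r-1}^{d-1}$, and the theorem reduces to the inequality
\begin{equation*}
\sum_{k}w_k\binom{y_k}{r-1}^{d-1}\ \ge\ \binom{x}{r-1}^{d}\qquad\text{whenever}\qquad\sum_{k}\binom{y_k}{r}^{d-1}=\binom{x}{r}^{d},\quad y_1\ge y_2\ge\cdots\ge r .
\end{equation*}
This is where the real work lies. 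The weights $w_k$ are not monotone, so I would first use Abel summation together with the partial-sum identity $\sum_{k\le m}w_k=\partial^{*}(m)$ and the one-dimensional bound $\partial^{*}(m)\ge\binom{z}{r-1}$ (with $\binom{z}{r}=m$) to replace $w_k$ by the monotone differences of $z\mapsto\binom{z}{r-1}$, turning the left side into a weighted sum with positive, decreasing weights. One then has to exploit the analytic properties of binomial coefficients --- the concavity of $\binom{z}{r}\mapsto\binom{z}{r-1}$ and the fact that $\binom{z}{r-1}$ is a ``geometrically convex'' function of $\binom{z}{r}$ (which comes down to the elementary monotonicity of $(z-r+1)\sum_{i=0}^{r-2}\tfrac1{z-i}$) --- together with the constraint $y_1\ge y_2\ge\cdots$, to run a rearrangement/smoothing argument showing that, for fixed total mass $\sum_k\binom{y_k}{r}^{d-1}$, the left side is minimized when all the active $\mathcal A_k$ coincide; a one-variable convex minimization then pins the minimum to $\binom{x}{r-1}^{d}$, attained exactly at a box $\binom{Y_1}{r}\times\cdots\times\binom{Y_d}{r}$ with all $\abs{Y_i}=x$. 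Unwinding the equality case through all the reductions --- no compression needed, each $\mathcal A_k$ a product by induction, all active $y_k$ forced equal --- recovers the claimed form $\F=\binom{Y_1}{r}\times\cdots\times\binom{Y_d}{r}$.

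I expect the last group of steps --- proving that the product/``box'' configuration is extremal for the numerical inequality --- to be the main obstacle: the interplay of the non-monotone combinatorial weights $w_k$ with the concavity of the relevant binomial functions makes the rearrangement argument delicate, and it may well be cleaner to recast this step as a continuous isoperimetric inequality (for subsets of $\Delta_r^{d}$, where $\Delta_r$ is the simplex modelling $\binom{X}{r}$) whose optimum is literally a box and is reached by symmetrization and reduction to $d=1$. By contrast, the compression lemma of the first paragraph and the detailed tracking of the equality case, though they require care, are routine.
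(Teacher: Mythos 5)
Your reductions up to the displayed numerical inequality are sound, and they closely parallel the paper's own preprocessing: your fibre-wise compression is Lemma~\ref{compression_lemma} (done there in one shot by replacing each fibre with a colex initial segment), the nesting of the $\mathcal{A}_k$ together with the identity $\abs{\partial\F}=\sum_k w_k\abs{\partial\mathcal{A}_k}$ and $\sum_{k\le m}w_k=\partial^{*}(m)$ is the content of Lemma~\ref{lemma_monotone_shadow}, and the two analytic facts you flag (concavity of $\binom{z}{r-1}$ as a function of $\binom{z}{r}$, and the monotonicity of $(z-r+1)\sum_i 1/(z-i)$, which is condition~\eqref{superlogconc}) are exactly Lemma~\ref{lemma_llr_check}. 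The gap is the final inequality $\sum_k w_k\binom{y_k}{r-1}^{d-1}\ge\binom{x}{r-1}^{d}$: everything before it is bookkeeping, this \emph{is} the theorem, and ``run a rearrangement/smoothing argument'' is a plan, not an argument. After your Abel summation the left side becomes $\sum_m\bigl(L(m)-L(m-1)\bigr)\binom{y_m}{r-1}^{d-1}$ with $L(m)=\binom{z_m}{r-1}$, $\binom{z_m}{r}=m$, and at that point you face a genuinely two-dimensional isoperimetric statement (one axis for $m$, one for the ``radius'' of the nested boxes $\mathcal{A}_k$). Neither naive smoothing of the $y_k$ toward a common value nor ``symmetrization and reduction to $d=1$'' closes it: the one-dimensional Kruskal--Katona theorem has already been spent in producing $L$, and it says nothing about why the square beats, say, a long thin staircase. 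You would also need to verify the inequality for arbitrary reals $y_k$ (your formulation retains no combinatorial structure of the $\mathcal{A}_k$ beyond nestedness) and to control the discretization error between the increments $L(m)-L(m-1)$ and a derivative.

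What fills this hole in the paper is Claim~\ref{claim_final} together with the flow argument proving its $d=2$ case: replace $KK_r$ by the smooth minorant $LL_r$ of \eqref{llfup}--\eqref{llfdown}, model the family as a closed monotone region $M\subset\R_+^2$ of the right area, and run the continuous-time transport $M_t$ that sweeps the mass of $M$ beyond $x=t$ uniformly onto the top of the part with $x\le t$; concavity of $f$ and \eqref{superlogconc} show $\area(f(M_t))$ is increasing in $t$ for $t>\sqrt{\area(M)}$, and a compactness argument then pins the square as the unique minimizer. General $d$ follows by applying the two-dimensional case to planar sections. Your closing suggestion that it ``may well be cleaner to recast this step as a continuous isoperimetric inequality whose optimum is literally a box'' is exactly right --- but that recast statement is the heart of the paper, and until you prove it your proof is not complete.
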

The rest of the paper contains the proof of this result.

\section*{Proof}
For simplicity of notation we shall assume that the ground set 
is $[n]\eqdef \{1,2,\dotsc,n\}$, with the ordering on it being the 
standard ordering of the integers. This incurs no loss
of generality.

A $k$-dimensional \emph{section} of a $d$-dimensional family $\F\subset \binom{X}{r}^d$
is the subfamily of $\F$ obtained by fixing $d-k$ coordinates. For example, for
any $(d-k)$-tuple $S=(S_1,\dotsc,S_{d-k})\in \binom{X}{r}^{d-k}$ the family
\begin{equation*}
\F_S\eqdef \bigl\{ (S_{d-k+1},\dotsc,S_d)\in \binom{X}{r}^k : (S_1,\dotsc,S_d)\in \F \bigr\}
\end{equation*}
is a $k$-section of $\F$. In general, any $d-k$ coordinates might be fixed, not necessarily
the first $d-k$.

We say that a family $\F\subset \binom{X}{r}^d$ is \emph{monotone} if every $1$-dimensional 
section is an initial segment in the colexicographic order.
\begin{lemma}[Proof deferred to p.~\pageref{compression_lemma_proof}]\label{compression_lemma}
For every family $\F\subset \binom{[n]}{r}^d$ there is a monotone family 
$\F_0\subset \binom{[n]}{r}^d$ of the same size as $\F$, and such that
$\abs{\partial \F_0}\leq \abs{\partial \F}$.
\end{lemma}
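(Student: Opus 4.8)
The plan is to produce $\F_0$ by repeatedly replacing, within a single coordinate, each one-dimensional section by the colexicographic initial segment of the same size. Fix a coordinate $t\in\{1,\dots,d\}$. For a $(d-1)$-tuple $\gamma=(S_s)_{s\ne t}\in\binom{[n]}{r}^{d-1}$ of $r$-sets in the other coordinates, let $G_\gamma\subset\binom{[n]}{r}$ be the one-dimensional section of $\F$ in coordinate $t$ obtained by fixing the other coordinates to $\gamma$, and let $\bar G_\gamma$ be the initial segment of colexicographic order of size $\abs{G_\gamma}$. Define $D_t\F$ to be the family obtained from $\F$ by replacing each $G_\gamma$ by $\bar G_\gamma$. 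Then $\abs{D_t\F}=\abs{\F}$, every coordinate-$t$ section of $D_t\F$ is a colex-initial segment, and the sections in the other coordinates are, fibre by fibre, merely permuted. The content of the argument is the inequality $\abs{\partial(D_t\F)}\le\abs{\partial\F}$.

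To prove it, I would cut the shadow along coordinate $t$. Partition $\partial\F$ by the value $\beta=(T_s)_{s\ne t}\in\binom{[n]}{r-1}^{d-1}$ taken on the coordinates other than $t$; if $B_\beta$ and $B'_\beta$ denote the parts of $\partial\F$ and of $\partial(D_t\F)$ with these coordinates fixed to $\beta$, then $\abs{\partial\F}=\sum_\beta\abs{B_\beta}$ and likewise for $D_t\F$. Directly from the definition of the shadow,
\[
  B_\beta=\bigcup_{\gamma}\partial G_\gamma
  \qquad\text{and}\qquad
  B'_\beta=\bigcup_{\gamma}\partial\bar G_\gamma ,
\]
where $\gamma$ runs over all $(d-1)$-tuples of $r$-sets each of whose entries contains the corresponding entry of $\beta$. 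Since the colexicographic initial segments of $\binom{[n]}{r}$ are linearly ordered by inclusion, the sets $\bar G_\gamma$ appearing here are nested, so if $\gamma^\ast$ is one with $\abs{G_{\gamma^\ast}}$ maximal then $\bar G_\gamma\subseteq\bar G_{\gamma^\ast}$, hence $\partial\bar G_\gamma\subseteq\partial\bar G_{\gamma^\ast}$, for every $\gamma$; therefore $B'_\beta=\partial\bar G_{\gamma^\ast}$. Now the Kruskal--Katona theorem gives $\abs{\partial\bar G_{\gamma^\ast}}\le\abs{\partial G_{\gamma^\ast}}$ because $\abs{\bar G_{\gamma^\ast}}=\abs{G_{\gamma^\ast}}$, and trivially $\abs{\partial G_{\gamma^\ast}}\le\abs{B_\beta}$. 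Hence $\abs{B'_\beta}\le\abs{B_\beta}$, and summing over $\beta$ yields $\abs{\partial(D_t\F)}\le\abs{\partial\F}$.

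It remains to iterate. Let $\rho(A)$ be the rank of $A\in\binom{[n]}{r}$ in colexicographic order and put $M(\F)=\sum_{(S_1,\dots,S_d)\in\F}\sum_{t=1}^d\rho(S_t)$, a nonnegative integer. Because a colex-initial segment has the least possible sum of ranks among sets of its size, each operator $D_t$ never increases $M$, and decreases it strictly unless every coordinate-$t$ section of $\F$ is already a colex-initial segment. So, as long as $\F$ is not monotone, I apply $D_t$ for some coordinate $t$ witnessing this; $M$ drops by at least $1$ each time, so the process terminates, necessarily at a monotone family $\F_0$. By the previous paragraph $\abs{\partial\F_0}\le\abs{\partial\F}$, while $\abs{\F_0}=\abs{\F}$ is preserved at every step, proving the lemma.

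The delicate point, and the reason the operator must send each section all the way to a colex-initial segment, is that a fibre $B_\beta$ of the shadow is a \emph{union} of shadows of many one-dimensional sections; a section-by-section bound $\abs{\partial\bar G_\gamma}\le\abs{\partial G_\gamma}$ carries no information about the size of such a union unless the families involved are nested, which is exactly what colex-initial segments guarantee and what mere stability under the classical $C_{ij}$ compressions does not. Along the way one must also verify the two elementary displayed identities for $B_\beta$ and $B'_\beta$ and confirm that $D_t$ affects the other coordinates only by a size-preserving rearrangement within each $\gamma$-fibre, so that the non-$t$ part of $M$ is unchanged.
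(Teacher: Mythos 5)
Your proposal is correct and takes essentially the same route as the paper: compress one coordinate at a time by replacing each one-dimensional section with the colex initial segment of its size, bound the shadow fibre by fibre using the nestedness of initial segments together with the classical Kruskal--Katona theorem, and guarantee termination via the sum-of-colex-ranks weight (the paper phrases this as choosing a minimal-weight family rather than iterating, but the mechanism is identical).
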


By the Lemma~\ref{compression_lemma} it suffices to restrict the attention to monotone families.
The shadows of monotone families are most easily described using the colexicographic 
ordering. This will permit us to establish a correspondence between $d$-dimensional
monotone families and subsets of $\N^d$. Let $\N\eqdef \{1,2,\dotsc\}$
be the set of positive integers, and partially order 
$\N^d$ by 
\begin{equation}\label{prodorder}
(x_1,\dotsc,x_d)\leq (y_1,\dotsc,y_d)\text{ whenever } x_i\leq y_i\text{ for every }i=1,\dotsc,d.
\end{equation}
A set $L\subset \N^d$ is said
to be \emph{monotone} if whenever $x=(x_1,\dotsc,x_d)\in L$, then $L$
contains all the elements smaller than $x$.

If $S\in \binom{[n]}{r}$ is the $i$'th
in the colexicographic ordering on $\binom{[n]}{r}$, then we put $\ind_r(S)=i$. A
tuple $S=(S_1,\dots,S_d)\in \binom{[n]}{r}^d$ is mapped to 
$\ind_r(S)\eqdef (\ind_r(S_1),\dotsc,\ind_r(S_d))$. In this manner every 
$\F\subset \binom{[n]}{r}^d$ is associated with its image $\ind_r(\F)\subset \N^d$.
An \emph{extreme point} of a monotone 
set $L\subset \N^d$ is a point $x\in L$ such that no point in $L$ 
is larger than $x$. The set of extreme points
of $L$ will be denoted $\extr L$. 
The \emph{monotone closure} of a set $L\subset \N^d$
is the set $\mclos(L)=\{x\in \N^d : x\leq y\text{ for some }y\in L\}$.
It is clear that $L=\mclos \extr L$ for any finite set $L$.

For an integer $m\geq 1$ let $KK_r(m)$ be the size of a shadow 
of the initial segment of length $m$ in colexicographic order 
of $\binom{[n]}{r}$. The Kruskal--Katona theorem states that if $\F\subset \binom{[n]}{r}$,
then $\abs{\partial \F}\geq KK_r(\abs{\F})$. 
We extend the definition of $KK_r$ to $KK_r\colon \N^d\to\N^d$ by 
$KK_r(a_1,\dotsc,a_d)\eqdef (KK_r(a_1),\dotsc,KK_r(a_d))$.
\begin{lemma}[Proof deferred to p.~\pageref{lemma_monotone_shadow_proof}]\label{lemma_monotone_shadow}
Let $\F\subset\binom{[n]}{r}^d$ be a monotone family. 
Then its shadow $\partial \F$ is also a monotone family, and 
\begin{equation*}
\extr \ind_{r-1} (\partial \F)=KK_r(\extr \ind_r (\F)).
\end{equation*}
\end{lemma}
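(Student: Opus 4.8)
The plan is to reduce the $d$-dimensional statement to three facts about the ordinary ($1$-dimensional) shadow operator, pushed through the dictionary between monotone families and down-closed subsets of $\N^d$. First I record that dictionary, which is essentially the content of the paragraph preceding the lemma: a family $\mathcal G\subset\binom{[n]}{r}^d$ is monotone if and only if $\ind_r(\mathcal G)$ is down-closed in the partial order \eqref{prodorder}. Indeed, a $1$-dimensional section of $\mathcal G$ is a colex-initial segment exactly when its set of indices is a down-set of $\N$, and one passes between ``every section is an initial segment'' and ``the index set is down-closed'' by modifying one coordinate at a time. Write $B(a)\subset\binom{[n]}{r}$ for the colex-initial segment of length $a$. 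Since a finite monotone $L\subset\N^d$ satisfies $L=\mclos\extr L$, a monotone $\F$ decomposes as a union of boxes,
\begin{equation*}
\F=\bigcup_{e=(e_1,\dots,e_d)\in\extr\ind_r(\F)}B(e_1)\times\dots\times B(e_d),
\end{equation*}
since a tuple lies in $\F$ precisely when its tuple of indices is $\leq$ some extreme point of $\ind_r(\F)$.

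The three ingredients are: \textbf{(i)} $\partial$ commutes with unions, $\partial(\mathcal G\cup\mathcal H)=\partial\mathcal G\cup\partial\mathcal H$, immediate from the definition; \textbf{(ii)} the shadow of a box is the box of coordinatewise shadows, $\partial(A_1\times\dots\times A_d)=(\partial A_1)\times\dots\times(\partial A_d)$ for nonempty $A_i\subset\binom{[n]}{r}$ (where $\partial A_i$ denotes the ordinary $1$-dimensional shadow), also immediate because in a product one may delete an element of each coordinate independently; and \textbf{(iii)} the classical Kruskal--Katona fact that the shadow of a colex-initial segment is again a colex-initial segment, i.e.\ $\ind_{r-1}(\partial B(a))=\{1,2,\dots,KK_r(a)\}$. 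Fact (iii) is the one genuinely combinatorial input --- it is already the case $d=1$ of the lemma --- and I would either quote it from the Kruskal--Katona literature or reprove it by the standard compression argument.

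Assembling these: applying (i) to the box decomposition, then (ii) to each box, then (iii) to each coordinate yields
\begin{equation*}
\begin{aligned}
\ind_{r-1}(\partial\F)&=\bigcup_{e\in\extr\ind_r(\F)}\{1,\dots,KK_r(e_1)\}\times\dots\times\{1,\dots,KK_r(e_d)\}\\
&=\mclos\bigl(KK_r(\extr\ind_r(\F))\bigr).
\end{aligned}
\end{equation*}
The right-hand side is down-closed, so by the dictionary $\partial\F$ is monotone --- the first assertion. For the second, take extreme points of both sides and use that for finite $S\subset\N^d$ the set $\extr\mclos(S)$ is exactly the set of maximal elements of $S$; this gives $\extr\ind_{r-1}(\partial\F)=KK_r(\extr\ind_r(\F))$. (One caveat: $KK_r\colon\N\to\N$ need not be strictly increasing --- for instance $KK_2(2)=KK_2(3)=3$ --- so $KK_r(\extr\ind_r(\F))$ need not be an antichain, and is then to be read as the antichain of its maximal elements, i.e.\ one discards the dominated points of the image.)

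The main obstacle is fact (iii), which I would rather cite than reprove; everything else is bookkeeping, the only points needing care being that (ii) and (iii) concern the ordinary shadow whereas the conclusion is about $\partial$ on $d$-tuples, and that the passage $\F\mapsto\ind_r(\F)$ really does carry $\partial$ to the monotone closure of the image of the extreme points, exactly as in the displayed computation.
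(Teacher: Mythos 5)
Your proof is correct and rests on the same key input as the paper's own argument---namely that the shadow of a colexicographic initial segment of length $a$ is the initial segment of length $KK_r(a)$---with your box decomposition over extreme points being an explicit, more structured packaging of the paper's terser coordinate-by-coordinate reasoning. Your caveat that $KK_r$ need not be injective, so that $KK_r(\extr \ind_r(\F))$ must be read as the antichain of its maximal elements (e.g.\ extreme points $(2,5)$ and $(3,2)$ for $r=2$ map to the comparable pair $(3,4)$ and $(3,3)$), is a fair observation about the precise wording of the statement; it is harmless for how the lemma is used later, since only $\mclos\bigl(KK_r(\extr \ind_r(\F))\bigr)$ enters the subsequent volume argument.
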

The preceding lemma permits us to forget about shadows of set 
families, and instead think about images of monotone sets under
$KK_r$. However, as $KK_r$ is quite an erratic function, our next step is
to replace it by a smoother function. For an integer $r\geq 2$ put 
\begin{equation}\label{llfup}
LL_r\Bigl(\binom{x}{r}\Bigr)=\binom{x}{r-1}\qquad\text{ if }x\geq r.
\end{equation}
Since $\binom{x}{r}$ is an increasing function of $x$ for $x\geq r-1$,
the function $LL_r$ is well-defined on $[1,\infty)$. 
We would like to extend $LL_r$ to $[0,1)$ while
maintaining the inequality $LL_r\leq KK_r$. Furthermore, as it will become
clear below, it will be essential for $LL_r$ to be increasing, concave and
to satisfy
\begin{equation}\label{superlogconc}
x\frac{f'(x)}{f(x)}<y\frac{f'(y)}{f(y)}\qquad\text{when }x>y.
\end{equation} 
Any extension of $LL_r$ to $[0,\infty)$ satisfying these conditions
is equally good for us. For example, one permissible extension is
\begin{equation}\label{llfdown}
LL_r(x)=r\left(x+\frac{1}{\sum_{i=1}^r 1/i}(x-x^2)\right)\qquad\text{ if }0\leq x\leq 1.
\end{equation}
\begin{lemma}[Proof deferred to p.~\pageref{lemma_llr_check_proof}]\label{lemma_llr_check}
The function $LL_r$ defined by \eqref{llfup}
and \eqref{llfdown} is a continuously differentiable function that 
is strictly increasing, concave, and satisfies \eqref{superlogconc}.
\end{lemma}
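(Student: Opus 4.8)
The plan is to establish the four asserted properties — that $LL_r$ is continuously differentiable, strictly increasing, concave, and satisfies \eqref{superlogconc} — by handling the polynomial branch \eqref{llfdown} on $[0,1]$ and the binomial branch \eqref{llfup} on $[1,\infty)$ separately, and then checking that the two branches meet $C^1$-ly at $x=1$. Concavity and \eqref{superlogconc} then follow on all of $[0,\infty)$: the derivative $LL_r'$ and the ratio $LL_r'(x)/\bigl(xLL_r(x)\bigr)$ are continuous across $x=1$, so if each is monotone on each branch it is monotone throughout. Throughout put $H_r=\sum_{i=1}^r 1/i$ and note $H_r\ge H_2=\tfrac32>1$.

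On $[0,1]$ everything is a one-line computation. Rewriting \eqref{llfdown} as $LL_r(x)=rx\cdot\tfrac{H_r+1-x}{H_r}$ gives $LL_r'(x)=r\cdot\tfrac{H_r+1-2x}{H_r}\ge r\bigl(1-\tfrac1{H_r}\bigr)>0$ and $LL_r''(x)=-2r/H_r<0$, so this branch is strictly increasing and concave. For \eqref{superlogconc} — the demand that $LL_r'(x)/\bigl(xLL_r(x)\bigr)$ be strictly decreasing — this ratio works out to $\tfrac1{x^2}\cdot\tfrac{H_r+1-2x}{H_r+1-x}$, a product of two functions that are positive and strictly decreasing on $(0,1]$ (positivity of the second factor uses $H_r+1-2x\ge H_r-1>0$ there), hence strictly decreasing itself.

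The branch \eqref{llfup} is the substantive part, and the key move is a substitution that stops the binomial coefficients from becoming unwieldy. Write the argument as $t=\binom\alpha r$ with $\alpha\ge r$ and set $s=\alpha-r+1\ge1$; then $t=\binom{s+r-1}{r}=\tfrac1{r!}\,s(s+1)\cdots(s+r-1)$, and since $\binom\alpha{r-1}=\tfrac{r}{\alpha-r+1}\binom\alpha r$ one obtains the clean formula $LL_r(t)=rt/s$. Differentiating the product gives $dt/ds=tS$ with $S:=\sum_{j=0}^{r-1}\tfrac1{s+j}>0$, so $t$ is an increasing function of $s$; and, writing $D:=sS=\sum_{j=0}^{r-1}\tfrac s{s+j}$, a short computation gives $LL_r'(t)=\tfrac rs\cdot\tfrac{D-1}{D}$ and $LL_r'(t)/\bigl(tLL_r(t)\bigr)=\tfrac{D-1}{t^2D}$. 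Three elementary facts about $s\ge1$, $r\ge2$ now finish this branch. (i)~$D$ increases in $s$ with $D\ge D|_{s=1}=H_r\ge\tfrac32$, so $D-1\ge\tfrac12$ and $D(D-1)\ge\tfrac34$; in particular $LL_r'(t)>0$. (ii)~$sD'=\sum_{j=1}^{r-1}\tfrac s{s+j}\cdot\tfrac j{s+j}<\sum_{j=1}^{r-1}\tfrac s{s+j}=D-1\le D(D-1)$, which is $\tfrac{d}{ds}\log LL_r'(t)=-\tfrac1s+\tfrac{D'}{D(D-1)}<0$, so $LL_r'$ decreases in $s$, hence in $t$, giving $LL_r''<0$. (iii)~$D'=\sum_{j=1}^{r-1}\tfrac j{(s+j)^2}<\sum_{j=1}^{r-1}\tfrac1{s+j}<S$, so by~(i) $\tfrac{D'}{D(D-1)}\le\tfrac43D'<\tfrac43S<2S$, i.e.\ $\tfrac{d}{ds}\log\bigl(LL_r'(t)/(tLL_r(t))\bigr)=\tfrac{D'}{D(D-1)}-2S<0$; as $t$ increases with $s$, this is \eqref{superlogconc} on $[1,\infty)$.

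Finally the branches are glued at $x=1$, i.e.\ $s=1$, $\alpha=r$. Both give $LL_r(1)=r$. From \eqref{llfdown}, $LL_r'(1^-)=r\bigl(1-\tfrac1{H_r}\bigr)$; from $LL_r'(t)=\tfrac rs\cdot\tfrac{D-1}D$ at $s=1$, where $D=\sum_{j=0}^{r-1}\tfrac1{1+j}=H_r$, one gets $LL_r'(1^+)=r\cdot\tfrac{H_r-1}{H_r}$ — the same value, which is precisely why the constant $1/\sum_{i=1}^r 1/i$ appears in \eqref{llfdown}. Hence $LL_r$ is $C^1$ on $[0,\infty)$; since $LL_r''<0$ on $(0,1)\cup(1,\infty)$ and $LL_r'$ is continuous at $1$, $LL_r'$ is strictly decreasing on $(0,\infty)$ and $LL_r$ is concave; and since $LL_r'(x)/\bigl(xLL_r(x)\bigr)$ is strictly decreasing on each branch and continuous at $1$ (as $LL_r(1)=r\ne0$), it is strictly decreasing on $(0,\infty)$, which is \eqref{superlogconc}. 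I expect the only real obstacle to be spotting the substitution $s=\alpha-r+1$ together with the identity $LL_r(t)=rt/s$; with these in hand every inequality required is one of~(i)--(iii), whereas without them one must compare the logarithmic derivatives of $\binom\alpha r$ and $\binom\alpha{r-1}$ by hand.
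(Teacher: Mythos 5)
Most of what you do is correct and, on those parts, cleaner than the paper: the substitution $s=\alpha-r+1$ with the identity $LL_r(t)=rt/s$ is exactly the simplification the paper reaches via $LL_r\bigl(\tbinom{x}{r}\bigr)/\tbinom{x}{r}=r/(x-r+1)$, your formula $LL_r'(t)=\tfrac rs\cdot\tfrac{D-1}{D}$ agrees with the paper's expression for $1/LL_r'$, and your monotonicity, concavity and $C^1$-gluing arguments are sound (the paper leaves the gluing as ``easy to check''). The genuine gap is in the last property. Condition \eqref{superlogconc} concerns the ratio $x\,\frac{f'(x)}{f(x)}=\frac{xf'(x)}{f(x)}$, but throughout you work with $\frac{f'(x)}{x\,f(x)}$; these differ by the increasing factor $x^2$, so monotonicity of your ratio says nothing about monotonicity of the one in \eqref{superlogconc}. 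This is not a cosmetic slip, because the two quantities genuinely behave differently on $[1,\infty)$: your own formulas give $\frac{tLL_r'(t)}{LL_r(t)}=\frac{D-1}{D}=1-\frac1D$ with $D=\sum_{j=0}^{r-1}\frac{s}{s+j}$ increasing in $s$, so $\frac{tLL_r'(t)}{LL_r(t)}$ is \emph{increasing} there (for $r=2$ it rises from $\tfrac13$ at $t=1$ toward $\tfrac12$), and your quantity $\frac{D-1}{t^2D}$ decreases only because of the spurious $1/t^2$. So step (iii) proves the wrong inequality, and the inequality \eqref{superlogconc} as literally printed is in fact false on the upper branch.

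You have in effect uncovered an inconsistency in the paper rather than merely misproving a true statement. The paper's own verification on $(1,\infty)$ rests on the assertion that $(x-r+1)/(x-j)$ is decreasing in $x$ for $j<r-1$; since $(x-r+1)/(x-j)=1-(r-1-j)/(x-j)$ it is increasing, so $\frac{LL_r(\binom{x}{r})}{\binom{x}{r}LL_r'(\binom{x}{r})}=\frac{D}{D-1}$ is decreasing, consistent with your $1-\frac1D$ increasing and contrary to what the paper claims. Moreover, what the proof of Claim~\ref{claim_final} actually uses is $\frac{tf'(t)}{f(t)}\geq\frac{yf'(y)}{f(y)}$ for $y=g_t(t)<t$, i.e.\ that $xf'(x)/f(x)$ is \emph{increasing}; the inequality in \eqref{superlogconc} is written with the wrong orientation. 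Under that corrected reading your upper-branch computation is exactly what is needed, but then the lower branch fails: the factor $\frac{H_r+1-2x}{H_r+1-x}$ that you correctly show to be decreasing on $(0,1]$ is precisely $xLL_r'(x)/LL_r(x)$ there, so the extension \eqref{llfdown} makes that ratio decrease on $[0,1]$ and increase on $[1,\infty)$ --- monotone in neither sense globally. To make the lemma true one must both reverse the inequality in \eqref{superlogconc} and replace \eqref{llfdown} by an extension for which $xLL_r'(x)/LL_r(x)$ is increasing on $[0,1]$ (e.g.\ one should check a power-law-type extension near $0$); only then does the two-branch gluing you describe, and the application in Claim~\ref{claim_final}, go through.
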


Extend $LL_r$ to $LL_r\colon \R_+^d\to\R_+^d$ by $LL_r(x_1,\dotsc,x_d)\eqdef
(LL_r(x_1),\dotsc,LL_r(x_d))$. Put $\R_+=[0,\infty)$. 
Partially order $\R_+^d$ according to \eqref{prodorder}, and extend
the definitions of the terms ``monotone'' and ``extreme point'' in the obvious way.
We associate to every monotone set $L\subset \N^d$ the set $M\subset \R_+^d$
given by $M=L+[-1,0]^d$. Geometrically, $M$ is the set obtaining by filling 
in the square lattice boxes indexed by $L$. The volume of $M$ is 
equal to the number of points in $L$. The set $M$ so obtained is monotone.
Since $LL_r(0)=0$ and $LL_r\leq KK_r$, Lemma~\ref{lemma_monotone_shadow} implies that
if $\abs{\partial \F} \leq t$ for some family $\F\subset \binom{[n]}{r}^d$,
then there is a closed monotone set $M\subset \R_+^d$ with $\vol(M)=\abs{\F}$ for which
$\vol(LL_r(M))\leq t$. The Theorem~\ref{main_theorem} thus follows from the following claim.
\begin{claim}\label{claim_final}
Suppose $f\colon \R_+\to\R_+$ is a continuously differentiable, strictly increasing, concave function 
satisfying \eqref{superlogconc} and $f(0)=0$. Define $f\colon \R_+^d\to \R_+^d$ by
$f(x_1,\dotsc,x_d)=(f(x_1),\dotsc,f(x_d))$. Then for every closed 
monotone set $M\subset \R_+^d$ we have
\begin{equation*}
\vol(f(M))\geq \vol(f(M_0))
\end{equation*}
where $M_0=[0,\sqrt[d]{\vol(M)}]^d$ is the cube of the same
volume as $M$, and one of whose vertices is at the origin.
Furthermore equality holds only if $M=M_0$.
\end{claim}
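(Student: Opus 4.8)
My approach is to change variables by $y_i=f(x_i)$ so that $\vol(f(M))$ becomes a weighted volume of $M$, then peel off one coordinate at a time by induction on $d$, and finish with a single scalar application of Jensen's inequality to $\log f(e^u)$. For the reformulation: since $f$ is strictly increasing, $C^1$ and concave with $f(0)=0$, we have $f'>0$ everywhere (otherwise concavity would push $f'$ below $0$ further to the right, contradicting monotonicity), so $x\mapsto(f(x_1),\dots,f(x_d))$ is a diffeomorphism onto its image and $\vol(f(M))=\int_M\prod_{i=1}^d f'(x_i)\,dx$. Encoding $M$ by the monotone lattice set $L\subset\N^d$ with $M=L+[-1,0]^d$ and putting $w(j)\eqdef f(j)-f(j-1)>0$ — so $w(1)\ge w(2)\ge\cdots$ by concavity and $\sum_{j\le t}w(j)=f(t)$ — this reads $\vol(f(M))=\sum_{k\in L}\prod_{i=1}^d w(k_i)$. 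With $V=\abs{L}$ the statement to prove becomes $\sum_{k\in L}\prod_i w(k_i)\ge f(V^{1/d})^d$, with equality only when $L=\{1,\dots,V^{1/d}\}^d$.

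I would induct on $d$, the case $d=1$ being the identity $\sum_{k\in L}w(k)=f(V)$. For $d\ge2$, slicing $L$ along its last coordinate gives monotone sets $L^{(1)}\supseteq L^{(2)}\supseteq\cdots$ in $\N^{d-1}$ and $\sum_{k\in L}\prod_i w(k_i)=\sum_{c\ge1}w(c)\bigl(\sum_{k'\in L^{(c)}}\prod_{i<d}w(k'_i)\bigr)\ge\sum_{c\ge1}w(c)\,\phi(a_c)$, where $a_c\eqdef\abs{L^{(c)}}$ and $\phi(t)\eqdef f(t^{1/(d-1)})^{d-1}$, the inequality being the inductive hypothesis applied to each slice. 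The point is that $\phi$ again satisfies every hypothesis on $f$: it is $C^1$ on $(0,\infty)$, increasing, with $\phi(0)=0$; it is concave because, with $s=t^{1/(d-1)}$, $\phi'(t)=f'(s)\,(f(s)/s)^{d-2}$ is a product of nonnegative nonincreasing functions of $s$; and $t\phi'(t)/\phi(t)=sf'(s)/f(s)$, so it inherits \eqref{superlogconc}. We are thus reduced to bounding $\sum_{c\ge1}w(c)\,\phi(a_c)$ from below over nonincreasing integer profiles $(a_c)$ with $\sum_c a_c=V$.

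For that scalar problem, relax integrality to the weaker requirement that each $a_c$ be $0$ or $\ge1$ — this is exactly the feature of genuine box-regions that prevents $\sum_c w(c)\phi(a_c)$ from being driven down to $0$ by "long, thin" profiles. For each fixed length the admissible profiles form a polytope, on which $a\mapsto\sum_c w(c)\phi(a_c)$ is concave (a sum of concave functions), so its minimum is attained at a vertex; the vertices are profiles taking at most the two values $1$ and some $v\ge1$, and on a genuinely rectangular one ($k$ columns of common height $V/k$) the value is $\phi(V/k)\sum_{c\le k}w(c)=\phi(V/k)f(k)$. A separate comparison — using concavity of $\phi$ and that the weights $w(c)$ are nonincreasing — shows the two-valued vertices are never smaller, so it remains to prove $\phi(V/k)f(k)\ge f(V^{1/d})^d$ for every real $k>0$. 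Setting $G(u)=\log f(e^u)$, which is strictly convex by \eqref{superlogconc}, the logarithm of the left side equals $(d-1)\,G\!\bigl(\tfrac{\log V-\log k}{d-1}\bigr)+G(\log k)$, a weighted sum with weights $d-1$ and $1$ of two values of $G$ whose barycenter is $\tfrac1d\log V$; Jensen bounds it below by $d\,G(\tfrac1d\log V)=\log f(V^{1/d})^d$, with equality, by strict convexity, only when the two arguments coincide, i.e. $k=V^{1/d}$.

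Tracking equality backwards — $k=V^{1/d}$ in the final inequality, hence the rectangular profile with $V^{1/d}$ columns of height $V^{1/d}$ in the reduction step, hence each nonempty slice a $(d-1)$-cube of side $V^{1/d}$ by induction — forces $L$ to be the cube $\{1,\dots,V^{1/d}\}^d$ (so in particular $V^{1/d}\in\N$), giving the equality clause. The step that genuinely needs care is the vertex analysis: checking that, under the unit-width-step constraint, the two-valued extreme profiles cannot beat the rectangular ones; the remaining ingredients (the change of variables, the inheritance of the hypotheses by $\phi$, and the single Jensen step) are routine bookkeeping. It is worth noting that \eqref{superlogconc} enters only in the final inequality and is precisely what makes it work: for $f$ linear every rectangular profile of a given volume yields the same value, so the cube is not distinguished, and \eqref{superlogconc} is exactly the input that breaks that tie.
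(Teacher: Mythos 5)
Your route is genuinely different from the paper's. The paper first reduces $d\ge 3$ to $d=2$ by replacing each $2$-dimensional section of $M$ by a square of equal area, and then settles $d=2$ by a continuous-time mass redistribution $M\mapsto M_t$ followed by a compactness argument on the space of monotone subsets of $[0,2T]\times[0,T]$. You instead write $\vol(f(M))=\int_M\prod_i f'(x_i)\,dx$, slice along one coordinate, and induct on $d$; the observation that $\phi(t)=f(t^{1/(d-1)})^{d-1}$ inherits every hypothesis on $f$ (your computations $\phi'(t)=f'(s)\bigl(f(s)/s\bigr)^{d-2}$ and $t\phi'(t)/\phi(t)=sf'(s)/f(s)$ are correct) is a clean structural point the paper does not isolate, and the final inequality $\phi(V/k)f(k)\ge f(V^{1/d})^d$ via Jensen for $G(u)=\log f(e^u)$ replaces the paper's infinitesimal rectangle computation by a global one. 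One remark: as printed, \eqref{superlogconc} says $xf'(x)/f(x)$ is \emph{decreasing}, which would make $G$ concave and Claim~\ref{claim_final} false (the square would then \emph{maximize} $f(X)f(Y)$ among rectangles of fixed area); the condition actually satisfied by $LL_r$ and used in the paper's own derivative estimate is that $xf'(x)/f(x)$ is increasing, i.e.\ $G$ strictly convex, which is the reading you adopt. So your use of Jensen is the right one.

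Two gaps remain before this proves Claim~\ref{claim_final} as stated. First, you only treat sets of the form $M=L+[-1,0]^d$ with $L\subset\N^d$ a monotone lattice set, while the claim concerns arbitrary closed monotone $M$ (and the extremal cube is generally not of lattice form, so your equality clause ``$L=\{1,\dots,V^{1/d}\}^d$'' does not even parse for general $V$). The fix is to run the same induction continuously: $\vol(f(M))=\int_0^\infty f'(c)\,\vol\bigl(f(M_c)\bigr)\,dc\ge\int_0^\infty f'(c)\,\phi(a(c))\,dc$ with $a(c)=\vol(M_c)$ nonincreasing and $\int a=V$; the layer-cake decomposition exhibits any such $a$ as a probability mixture of rectangular profiles $\tfrac{V}{\ell}\mathbf{1}_{[0,\ell]}$, and concavity of the functional finishes. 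This also exposes that your ``$a_c=0$ or $a_c\ge 1$'' floor is not what prevents thin profiles from degenerating — convexity of $G$ already gives $\phi(V/k)f(k)\ge f(V^{1/d})^d$ for every real $k>0$ — and dropping it eliminates the two-valued vertices entirely. Second, if you keep the discrete setting, the comparison you flag but do not prove is genuinely needed; here is the missing line: fixing $j$ and letting $v$ vary with $N=V-j(v-1)$, the value $\phi(v)f(j)+\phi(1)\bigl(f(N)-f(j)\bigr)$ is concave in $v$ (a nonnegative combination of concave functions of affine arguments), hence minimized at an endpoint $v=1$ or $v=V/j$, and both endpoints are rectangular profiles. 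Finally, the equality tracking is looser than you suggest: a concave objective can attain its minimum on an entire face, not only at vertices, so pinning down $M=M_0$ requires combining the strictness in Jensen with a check that equality in each inductive slice forces every nonempty slice to be the same cube; this is doable but is not ``routine bookkeeping.''
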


To prove the claim we shall first establish it in the dimension $d=2$, and 
use that to deduce the general case. Indeed, assume that the two-dimensional
case is known, $d\geq 3$, and $M$ is not a cube. Pick any $2$-dimensional
coordinate plane $P$. On each $2$-dimensional section
of $M$ by a plane parallel to $P$, replace the section of $M$ 
by a square of the same area as the area of that section. The operation
yields a monotone set, and by the case $d=2$ of the claim, 
it reduces the volume of $f(M)$ unless every section of $M$ 
is a square. Therefore, the only minimizer of $\vol(f(M))$ 
is the cube $[0,\sqrt[d]{\vol(M)}]^d$.

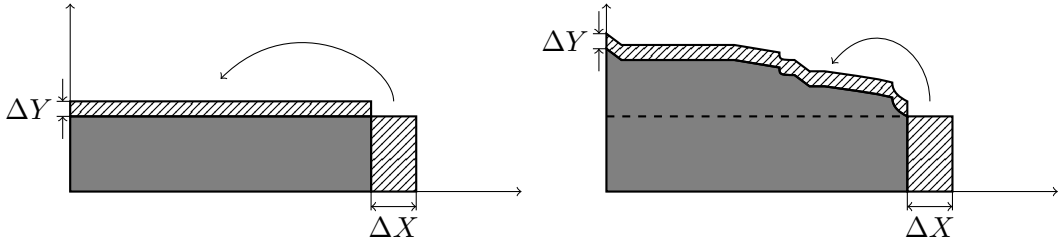
\begin{figure}[t]
\begin{tikzpicture}
\draw [<->] (6,0) -- (0,0) -- (0,2.5);
\filldraw[draw=black,fill=gray,thick] (0,0) rectangle (4,1);
\draw[thick,pattern=north east lines,pattern color=black] (4,0) rectangle (4.6,1);
\draw[thick,pattern=north east lines,pattern color=black] (0,1) rectangle (4,1.2);
\draw [->] (4.3,1.2) .. controls (4.3,1.7) and (3,2.5) .. (2,1.5);
\draw[thin] (4,0) -- (4,-0.25);
\draw[thin] (4.6,0) -- (4.6,-0.25);
\draw[<->, thin] (4,-0.2) -- (4.3,-0.2) node [anchor=north] {$\Delta X$} -- (4.6,-0.2);
\draw[thin] (0,1) -- (-0.18,1);
\draw[thin] (0,1.2) -- (-0.18,1.2);
\draw[->, thin] (-0.1,1.5) -- (-0.1,1.2);
\draw[->, thin] (-0.1,0.7) -- (-0.1,1);
\draw (-0.1,1.1) node [anchor=east] {$\Delta Y$};
\end{tikzpicture}
\begin{tikzpicture}
\draw [<->] (6,0) -- (0,0) -- (0,2.5);
\filldraw[draw=black,fill=gray,thick] (0,0) -- (4,0) -- (4,1) 
.. controls (3.9,1.05) and (3.8,1.12) .. (3.8,1.25) .. controls (3.6,1.3) .. (2.9,1.4)
-- (2.7,1.4) -- (2.5,1.55) .. controls (2.3,1.55) and (2.3,1.56) .. (2.3,1.65) 
-- (1.7,1.75) -- (0.2,1.75) -- (0,1.9) -- cycle;
\draw[thick,pattern=north east lines,pattern color=black] (4,0) rectangle (4.6,1);
\draw[thick, pattern=north east lines, pattern color=black] (4,1.2) 
.. controls (3.9,1.25) and (3.8,1.32) .. (3.8,1.45) .. controls (3.6,1.5) .. (2.9,1.6)
-- (2.7,1.6) -- (2.5,1.75) .. controls (2.3,1.75) and (2.3,1.76) .. (2.3,1.85) 
-- (1.7,1.95) -- (0.2,1.95) -- (0,2.1) -- 
(0,1.9) -- (0.2,1.75) -- (1.7,1.75) -- (2.3,1.65) .. controls (2.3,1.56) and (2.3,1.55) .. (2.5,1.55)
-- (2.7,1.4) -- (2.9,1.4) .. controls (3.6,1.3) .. (3.8,1.25) .. controls (3.8,1.12) and
(3.9,1.05) .. (4,1) -- cycle;
\draw[thick,dashed] (0,1) -- (4,1);
\draw [->] (4.3,1.2) .. controls (4.3,1.9) and (3.55,2.4) .. (3,1.7);
\draw[thin] (4,0) -- (4,-0.25);
\draw[thin] (4.6,0) -- (4.6,-0.25);
\draw[<->, thin] (4,-0.2) -- (4.3,-0.2) node [anchor=north] {$\Delta X$} -- (4.6,-0.2);
\draw[thin] (0,1.9) -- (-0.18,1.9);
\draw[thin] (0,2.1) -- (-0.18,2.1);
\draw[->, thin] (-0.1,2.4) -- (-0.1,2.1);
\draw[->, thin] (-0.1,1.6) -- (-0.1,1.9);
\draw (-0.1,2) node [anchor=east] {$\Delta Y$};
\end{tikzpicture}
\caption{\label{compfig}The area-reducing transformation for an elongated rectangle (left),
and for a general monotone set (right).}
\end{figure}

So assume $d=2$.
To see where the condition \eqref{superlogconc} comes from consider the case where $M$
is a rectangle, i.e.\ a set of the form $M=[0,X]\times [0,Y]$, with say $X>Y$. In that case,
if we are to move a small amount of mass from the shorter side to the longer one,
to obtain a less elongated rectangle $M^*=[0,X-\Delta X]\times [0,Y+\Delta Y]$, then
\eqref{superlogconc} is exactly what is necessary to conclude that
$\area(f(M^*))<\area(f(M))$.

The situation when $M$ is not a rectangle is to our advantage because
$f$ is concave and we place the mass farther from the origin than
in the case when $M$ is a rectangle. The only complication is that we need
to introduce continuous time to avoid technicalities arising from discrete
time increments.

Since $M$ is monotone there is a decreasing function $g_{\infty}\colon \R_+\to\R_+$
so that $M=\{(x,y)\in \R_+^2 : y\leq g_{\infty}(x)\}$. Since $M$ is closed, $g_{\infty}$ is
left-continuous. 
Define $g_t\colon \R_+\to \R_+$ by
\begin{equation*}
g_t(x)=
\begin{cases}
g_\infty(x)+\frac{1}{t}\int_{[t,\infty)}g_\infty(y)\,dy&\text{if }x\leq t,\\
0&\text{if }x>t.
\end{cases}
\end{equation*}
Let $M_t=\{(x,y)\in\R_+^2 : y\leq g_t(x)\}$. Then $\area(M_t)=\area(M)$.
Differentiating 
\begin{equation*}
\area(f(M_t))=\int_{[0,t]}f(g_t(x))f'(x)\,dx,
\end{equation*}
we obtain
\begin{align*}
\frac{\partial \area(f(M_t))}{\partial t}&=
f(g_t(t))f'(t)+\int_{[0,t]}f'(g_t(x))\frac{\partial g_t}{\partial t}(x)f'(x)\,dx\\
&\geq f(g_t(t))f'(t)+f'(g_t(t))\int_{[0,t]}\frac{\partial g_t}{\partial t}(x)f'(x)\,dx\\
&= f(g_t(t))f'(t)+f'(g_t(t))\left(\frac{\partial g_t}{\partial t}(t)f(t)-\int_{[0,t]}f(x)
\frac{\partial^2 g_t(x)}{\partial x\partial t}\,dx\right)
\\&=f(g_t(t))f'(t)+f'(g_t(t))\frac{\partial g_t}{\partial t}(t)f(t),
\end{align*}
where the inequality holds since $f$ is concave, and $(\partial g_t/\partial t)f'$ is negative (see
Figure~\ref{compfig} for a geometric illustration of the inequality).
Since $\partial g_t/\partial t\geq -g_t(t)/t$, from \eqref{superlogconc} it follows
that $\area(f(M_t))$ is an increasing function of $t$ as long as $g_t(t)<t$.

Let $T=\sqrt{\area(M)}$.
Since $\area(M_t)\geq t g_t(t)$, it follows that  
$g_t<t$ for every $t>T$. Thus $\area(f(M_T))\leq \area(f(M))$, with
equality only if $M\subset [0,T]\times \R_+$. Since 
$g_T(x)\leq g_\infty(x)+\area(M)/T$ it follows that if $M\subset \R_+\times [0,Y]$,
then $M_T\subset [0,T]\times [0,Y+\area(M)/T]=[0,T]\times [0,Y+T]$.
Reversing the roles of $x$ and $y$ axes, and applying the argument to $M_T$, 
it follows that for every closed monotone set $M\subset \R_+^2$ there is a 
compact monotone set $M'\subset [0,2T]\times [0,T]$ for which
$\area(f(M'))\leq \area(f(M))$ with equality holding only for $M=[0,T]^2$.
Since the space of compact monotone subsets of $[0,2T]\times [0,T]$
endowed with Hausdorff distance is a compact space, and
$\area(f(\cdot))$ is a continuous function on the space, it follows
that $[0,T]^2$ is a unique set minimizing this function.
This completes the proof of the Claim~\ref{claim_final} in the 
case $d=2$.

\section*{Deferred lemmas}
\begin{proof}[Proof of Lemma~\ref{compression_lemma}]\label{compression_lemma_proof}
For the duration of this proof define the \emph{weight} of $\F\subset \binom{[n]}{r}^d$
to be $\sum_{S\in \F} \norm{\ind_r(\F)}_1$, where 
$\norm{(m_1,\dotsc,m_d)}_1=m_1+\dotsb+m_d$. We may assume that $\F$ has
smallest weight among families of size $\abs{\F}$ and whose shadow
does not exceed $\abs{\partial \F}$.

Suppose some $1$-dimensional section of $\F$ is not an initial segment
of the colexicographic order. Without loss of generality we may
assume that the section is of the form $\F_S$ for some $S$.
Define a compression operator $\Delta\colon 2^{\binom{[n]}{r}}\to 2^{\binom{[n]}{r}}$
which takes $\F\subset \binom{[n]}{r}$ to the initial segment
of $\binom{[n]}{r}$ in the colexicographic order. One can write $\F$
as a disjoint union of its $1$-dimensional sections as 
\begin{equation*}
\F=\bigcup_{S\in \binom{[n]}{r}^{d-1}} \{S\} \times \F_S.
\end{equation*}
Define
\begin{equation*}
\F'=\bigcup_{S\in\binom{[n]}{r}^{d-1}} \{S\} \times \Delta \F_S.
\end{equation*}
We claim that $\abs{\partial \F'}\leq \abs{\partial \F}$. Indeed,
let $S'\in \binom{[n]}{r-1}^{d-1}$ be arbitrary, and consider
the section $(\partial \F')_{S'}$. The section has at least 
$t$ elements if and only if there is a $S\in \binom{[n]}{r}^{d-1}$
such that $S'\in \partial S$ and $KK_r(\abs{\F_S})\geq t$.
Hence, if $\abs{(\partial \F')_{S'}}\geq t$, then by the classical 
Kruskal--Katona inequality $\abs{(\partial \F)_{S'}}\geq t$.
Since the inequality holds for every $S'$, it follows that
\begin{equation*}
\abs{\partial \F}=\sum_{S'\in \binom{[n]}{r-1}^{d-1}} \left\lvert(\partial \F)_{S'}\right\rvert
\geq \sum_{S'\in \binom{[n]}{r-1}^{d-1}} \left\lvert(\partial \F')_{S'}\right\rvert=\abs{\partial \F'}
\end{equation*}
Since the weight of $\F'$ is less than that of $\F$, this contradicts the choice of $\F$.
\end{proof}

\begin{proof}[Proof of Lemma~\ref{lemma_monotone_shadow}]\label{lemma_monotone_shadow_proof}
First we establish that $\partial \F$ is monotone. Suppose 
$S=(S_1,S_2,\dotsc,S_d)\in \partial \F$  and $S_1'$ precedes $S_1$ 
in colexicographic order. There is an 
$\bar{S}=(\bar{S}_1,\dotsc,\bar{S}_d)\in \F$ so that $S\in \partial \bar{S}$.
Since the shadow of an initial segment of colexicographic order
is an initial segment of colexicographic order, there is
an $\bar{S}_1'\in \binom{[n]}{r}$ so that $\bar{S}_1'$ precedes
$\bar{S}_1$ in the order, and $S_1'\in\partial \bar{S}_1'$.
Thus $(S_1',S_2,\dotsc,S_d)\in \partial (\bar{S}_1',S_2,\dotsc,S_d)
\subset \partial \F$. This shows that the $1$-dimensional section
 $(\partial \F)_{S_2,\dotsc,S_d}$ of $\partial \F$ is monotone.
Since ordering of coordinates is arbitrary, it follows that every $1$-dimensional
section of $\F$ is monotone, i.e.\ $\F$ is monotone.

From the definition of $KK_r$ it follows that 
$\max \ind_{r-1}(\partial \F_0)=KK_r(\abs{F}_0)$
whenever $\F_0$ is the initial segment of $\binom{[n]}{r}$
in the colexicographic order.  The second claim of the Lemma 
is then again a consequence of the fact
that an image of an initial segment of colexicographical order on $\binom{[n]}{r}$ is 
an initial segment on $\binom{[n]}{r-1}$. 
\end{proof}

\begin{proof}[Proof of Lemma~\ref{lemma_llr_check}]\label{lemma_llr_check_proof}
It is clear that the function defined by \eqref{llfup}
is a continuous monotone increasing function.
The concavity of $LL_r$ on $(1,\infty)$ follows from a simple derivative calculation:
Indeed, for $x\geq r$
\begin{align*}
\frac{d}{dx} LL_r\Bigl(\binom{x}{r}\Bigr)&=\frac{d}{dx} \binom{x}{r-1},\\
LL_r'\Bigl(\binom{x}{r}\Bigr)\binom{x}{r}\left(\frac{1}{x}+\dotsb+\frac{1}{x-r+1}\right)&=
\binom{x}{r-1}\left(\frac{1}{x}+\dotsb+\frac{1}{x-r+2}\right),\\
1/LL_r'\Bigl(\binom{x}{r}\Bigr)&=
\frac{x-r+1}{r} \frac{\frac{1}{x}+\dotsb+\frac{1}{x-r+1}}{\frac{1}{x}+\dotsb+\frac{1}{x-r+2}},\\
1/LL_r'\Bigl(\binom{x}{r}\Bigr)&=
\frac{1}{r}\left(x-r+1+\frac{1}{\frac{1}{x}+\dotsb+\frac{1}{x-r+2}}\right),
\end{align*}
from which it is clear that $LL_r'$ is decreasing on $(1,\infty)$.
Moreover this expression for $LL_r'$ and
\begin{equation*}
LL\Bigl(\binom{x}{r}\Bigr)/\binom{x}{r}=\binom{x}{r-1}/\binom{x}{r}=r/(x-r+1)
\end{equation*}
imply that
\begin{equation*}
\frac{LL_r\Bigl(\binom{x}{r}\Bigr)}{\binom{x}{r} LL_r'\Bigl(\binom{x}{r}\Bigr)}=
1+\frac{1}{(x-r+1)\left(\frac{1}{x}+\dotsb+\frac{1}{x-r+2}\right)}.
\end{equation*}
Since $(x-r+1)/(x-t)$ is a decreasing function of $x$ for every $t<r-1$,
it follows that $\frac{LL_r\Bigl(\binom{x}{r}\Bigr)}{\binom{x}{r} LL_r'\Bigl(\binom{x}{r}\Bigr)}$
is increasing, i.e. $LL_r$ satisfies \eqref{superlogconc} on $(1,\infty)$.

Since $x-x^2$ is concave, the function given by \eqref{llfdown} is concave on $[0,1)$.
For brevity of notation put $\epsilon\eqdef \left(\sum_{i=1}^r 1/i\right)^{-1}$.
Monotonicity of $LL_r$ on $[0,1)$ follows from $\epsilon<1$. Furthermore, for $x\in[0,1)$
we have
\begin{equation*}
x\frac{LL_r'(x)}{LL_r(x)}=x\frac{1+\epsilon(1-2x)}{x+\epsilon(x-x^2)}=
2-\frac{1+\epsilon}{1+\epsilon(1-x)},
\end{equation*}
from which we see that $LL_r$ satisfies \eqref{superlogconc} on $[0,1)$.
Finally, it is easy to check that at $x=1$ the function $LL_r(x)$ is continuous and
the left and right derivatives agree.
\end{proof}

\section*{Concluding remarks}
For us the original motivation for the study of shadows of $d$-dimensional families
was in their application to convexity spaces, and Eckhoff's conjecture\cite{bukh_eckhoff}. For that
application Theorem~\ref{main_theorem} sufficed. However, it would be interesting to
find the sharp multidimensional generalization of Kruskal--Katona theorem.

It is worth noting that the argument given in this paper is largely insensitive to the poset structure
of $2^X$. The only input it uses is the one-dimensional Kruskal--Katona theorem. First,
Lemma~\ref{compression_lemma} is a direct consequence of the fact that the Kruskal--Katona 
theorem equality is attained only for an initial segment of a certain linear order. 
Secondly, a weaker quantitative form of the Kruskal--Katona theorem is used in Lemma~\ref{lemma_llr_check} 
to construct a continuous function to which Claim~\ref{claim_final} 
applies.\vspace{1.5ex}

\noindent\textbf{Acknowledgement.} I am thankful to Peter Keevash for a helpful discussion.
I thank the referee for many improvements in presentation.

\bibliographystyle{alpha}
\bibliography{kkmultidim}

\end{document}